\definecolor{hot}{RGB}{65,105,225}
\theoremstyle{plain}
\newtheorem{theorem}{Theorem}[section]
\newtheorem{proposition}[theorem]{Proposition}
\newtheorem{lm}[theorem]{Lemma}
\newtheorem{corollary}[theorem]{Corollary}
\newtheorem{lemma}[theorem]{Lemma}
\newtheorem{thrm}[theorem]{Theorem}
\theoremstyle{definition}
\newtheorem{remark}[theorem]{Remark}
\newtheorem{ex}[theorem]{Example}
\newtheorem*{ex*}{Example}
\newtheorem{problem}{Problem}
\def\be{\begin{equation}}
\def\ee{\end{equation}}
\def\bt{\begin{thrm}}
\def\et{\end{thrm}}
\def\bc{\begin{cor}}
\def\ec{\end{cor}}
\def\br{\begin{rmk}}
\def\er{\end{rmk}}
\def\bp{\begin{prop}}
\def\ep{\end{prop}}
\def\bl{\begin{lm}}
\def\el{\end{lm}}
\def\bex{\begin{ex}}
\def\eex{\end{ex}}
\def\bd{\begin{defn}}
\def\ed{\end{defn}}
\newcommand\sE{{\mathcal E}}
\newcommand\sU{{\mathcal U}}
\newcommand\sV{{\mathcal V}}
\newcommand\sZ{\mathcal{Z}}
\DeclareMathOperator{\Jac}{Jac}
\DeclareMathOperator{\codim}{codim}              
\DeclareMathOperator{\reg}{reg}                  
\DeclareMathOperator{\sing}{sing}                  
\DeclareMathOperator{\Eu}{Eu}
\DeclareMathOperator{\EDdeg}{EDdeg}
\DeclareMathOperator{\PEDdeg}{EDdeg_{proj}}
\def\bC{\mathbb{C}}
\def\RR{\mathbb{R}}
\def\bP{\mathbb{P}}
\def\lra{\longrightarrow}
\def\bZ{\mathbb{Z}}
\def\balpha{{\bm\alpha}}
\def\bbeta{{\bm\beta}}
\def\C{\mathbb{C}}
\title[ED degree of projective varieties]{Euclidean distance degree of projective varieties}
\author{Laurentiu G. Maxim}
\address{Department of Mathematics,         University of Wisconsin-Madison,  480 Lincoln Drive, Madison WI 53706-1388, USA.}
\email {maxim@math.wisc.edu}\urladdr{https://www.math.wisc.edu/~maxim/}
\author{Jose Israel Rodriguez}
\address{Department of Mathematics,         University of Wisconsin-Madison,  480 Lincoln Drive, Madison WI 53706-1388, USA.}
\email {jose@math.wisc.edu}\urladdr{http://www.math.wisc.edu/~jose/}
\author{Botong Wang}
\address{Department of Mathematics,         University of Wisconsin-Madison,  480 Lincoln Drive, Madison WI 53706-1388, USA.}
\email {wang@math.wisc.edu}\urladdr{http://www.math.wisc.edu/~wang/}
\keywords{Euclidean distance degree, Euler characteristic, local Euler obstruction function}
\subjclass[2010]{13P25, 57R20, 90C26}
\begin{document}

\date{\today}

\maketitle

\begin{abstract}  
We give a positive answer to a conjecture of Aluffi-Harris on the computation of the Euclidean distance degree of a possibly singular projective variety in terms of the local Euler obstruction function.
\end{abstract}



\section{Introduction}\label{intro}
Many models in data science, engineering and other applied fields are realized as real algebraic varieties, for which one needs to solve a {\it nearest point problem}. Specifically, for such a real algebraic variety $X \subset \RR^n$, given $\balpha\in\RR^n$, one needs to compute $\balpha^*\in X_{\reg}$ that minimizes the (squared) Euclidean distance from the given point $\balpha$. (Here, $X_{\reg}$ denotes the nonsingular locus of $X$.)

An algebraic measure of complexity of this optimization problem and a good indicator of the running time needed to solve the problem exactly consists of computing all of the critical points of the squared Euclidean distance function on the (nonsingular part of the) Zariski closure of $X$ in $\bC^n$. This number of complex critical points is an intrinsic invariant of the optimization problem at hand, called the {\it Euclidean distance degree}. It was introduced in \cite{DHOST}, and has been extensively studied since, e.g., see \cite{AH}, \cite{HL}, \cite{OSS}, \cite{MRW}.

In more details, to any $\bm{\alpha}=(\alpha_1, \ldots, \alpha_n)\in \bC^n$, one associates the squared Euclidean distance function  $f_{\bm{\alpha}}: \bC^n\to \bC$ given by
$$f_{\bm{\alpha}}(x_1,\ldots,x_n):=\sum_{1\leq i\leq n}(x_i-\alpha_i)^2.$$
If $X$ is an irreducible closed subvariety of $\bC^n$ then, for generic choices of $\bm{\alpha}$, the function $f_{\bm{\alpha}}|_{X_{\reg}}$ has finitely many critical points on the nonsingular locus $X_{\reg}$ of $X$. This number of critical points is independent of the generic choice of $\bm{\alpha}$, and it is called the {\it Euclidean distance degree} (or ED degree) of $X$, denoted by $\EDdeg(X)$. 

The ED degree of complex affine varieties was recently computed in \cite{MRW}, and it was used to solve the {\it multiview conjecture} of \cite[Conjecture 3.4]{DHOST}. Specifically, the following result holds:

\bt[\cite{MRW}]\label{eda}
Let $X$ be an irreducible closed subvariety of $\bC^n$. Then for a general $\bbeta=(\beta_0,\beta_1,\ldots,\beta_n)\in \bC^{n+1}$ we have:
\be\label{e1i} \EDdeg(X)=(-1)^{\dim X}\chi\big({\rm Eu}_X|_{U_\bbeta}\big),\ee
where $$U_\bbeta:=\bC^n\setminus \{\sum_{1\leq i\leq n}(x_i-\beta_i)^2+\beta_0=0\},$$ and ${\rm Eu}_X$ is the local Euler obstruction function on $X$. In particular, if $X$ is a smooth closed subvariety of $\bC^n$, then for general $\bbeta\in \bC^{n+1}$, we get:
\be\label{e2i} \EDdeg(X)=(-1)^{\dim X}\chi\big(X\cap U_\bbeta\big).\ee
\et 

On the other hand, many models are realized as affine cones because the varieties are defined by homogeneous polynomials. 
In this setting it is natural to consider the model as a projective variety. 
Examples of such models occur in structured low rank matrix approximation \cite{OSS}, 
low rank tensor approximation, formation shape control \cite{AH2014}, and all across algebraic statistics \cite{DSS2009,Sullivant2018}. 
Thus, if $X$ is an irreducible closed subvariety of $\bP^{n}$, we define the {\it (projective) Euclidean distance degree} of $X$ by $$\PEDdeg(X)=\EDdeg (C(X)),$$ where $C(X)$ is the affine cone of $X$ in $\bC^{n+1}$.
Here, we address the following problem:
\begin{problem}
If $X$ is a projective variety of $\bP^n$ with its affine cone denoted by $C(X)$,  then give a description of the ED degree of $C(X)$ in terms of the topology of $X$.
%
\end{problem}

The motivation for studying the ED degree problem in terms of projective geometry 
rather than working with affine cones comes from the fact that an affine cone acquires a complicated singularity at the cone point.

The above problem has been recently considered by Aluffi-Harris in \cite{AH}, building on preliminary results from \cite{DHOST}. The main result of Aluffi-Harris can be formulated as follows:
\bt\cite[Theorem 8.1]{AH} \label{edpsm}
Let $X$ be a smooth subvariety of $\bP^n$, and assume that $X \nsubseteq Q$, where 
$Q=\{[x_0:\ldots:x_n]\in \bP^n \mid x_0^2+x_1^2+\ldots + x_n^2=0\}$ is the isotropic quadric in $\bP^n$. Then
\be\label{e3i}
\PEDdeg(X)=(-1)^{\dim X}\chi\big(X \setminus (Q \cup H)\big),
\ee
where $H$ is a general hyplerplane.
\et
Theorem \ref{edpsm} is proved in \cite{AH} by using the theory of characteristic classes for singular varieties, and it provides a generalization of \cite[Theorem 5.8]{DHOST}, where it was assumed that the smooth projective variety $X$ intersects the isotropic quadric $Q$ transversally, i.e., that $Q \cap X$ is a smooth hypersurface of $X$. Aluffi and Harris also expressed hope that formula (\ref{e3i}) would admit a more direct proof, without reference to characteristic classes, which may be more amenable to generalization. In fact, they conjectured that formula (\ref{e3i}) should admit a natural generalization to arbitrary (possibly singular) projective varieties by using the ``Euler-Mather characteristic'' defined in terms of the local Euler obstruction function. We address their conjecture in the following statement:
\bt\label{edpsi}
Let $X$ be any irreducible closed subvariety of $\bP^n$. Then for a general $\bbeta=(\beta_0,\beta_1,\ldots,\beta_n)\in \bC^{n+1}\setminus\{0\}$ we have
\be\label{e4i} \PEDdeg(X)=(-1)^{\dim X}\chi\big({\rm Eu}_X|_{\mathcal{U}_\bbeta}\big),\ee
where $$ \mathcal{U}_\bbeta:=\bP^n \setminus (Q \cup H_{\bbeta}),$$
with $Q$ denoting the isotropic quadric and $H_{\bbeta}:= \{\beta_0 x_0+\beta_1 x_1+\ldots + \beta_nx_n=0 \}.$
\et

The proof of Theorem \ref{edpsi} is Morse-theoretic in nature (resting on results of \cite{STV,TS}), and it employs ideas similar to those needed for proving Theorem \ref{eda}. While one may be tempted to deduce Theorem \ref{edpsi} directly from Theorem \ref{eda}, such an approach proves to be surprisingly challenging due to the presence of the cone point.

Note that in the case when $X\subset \bP^n$ is a smooth subvariety, Theorem \ref{edpsi} reduces to the statement of Theorem \ref{edpsm}. Indeed, if $X$ is smooth, then $\Eu_X=1_X$ and the assertion follows. Theorem \ref{edpsi} also generalizes \cite[Proposition 3.1]{AH}, where the ED degree of a projective variety $X \subset \bP^n$  is computed under the assumption that $X$ intersects the isotropic quadric $Q$ transversally.

 
 \medskip

The paper is organized as follows. 
In Section~\ref{sec:criticalPoints}, we introduce the local Euler obstruction function and we recall its main properties. 
In Section~\ref{sec:pdeg}, we prove our main Theorem \ref{edpsi}, after first interpreting the projective ED degree as the number of the degeneration points of a certain rational $1$-form. Section \ref{sec:ex} is devoted to computations of the projective ED degree on specific examples of singular projective varieties.

\medskip 

{\bf Acknowledgements}
 The authors thank J\"org Sch\"urmann for useful discussions. 
L. Maxim is partially supported by the Simons Foundation Collaboration Grant \#567077 and by the
Romanian Ministry of National Education, CNCS-UEFISCDI, grant PN-III-P4-ID-PCE-2016-0030. 
J.~I. Rodriguez is partially supported by the College of Letters and Science, UW-Madison. 
B. Wang is partially supported by the NSF grant DMS-1701305.


\section{Local Euler Obstruction and critical points of generic linear functions}\label{sec:criticalPoints}


Let $X$ be a complex algebraic variety. It is well-known that such a variety can be endowed with a Whitney stratification. Roughly speaking, this means that $X$ admits a partition $\mathcal{S}$  into nonempty, locally closed nonsingular subvarieties (called {\it strata}), along which $X$ is topologically equisingular.


A function $\varphi:X \to \bZ$ is {\it constructible} with respect to a given Whitney stratification $\mathcal{S}$ (or, $\mathcal{S}$-constructible), if $\varphi$ is constant along each stratum $S \in \mathcal{S}$. The {\it Euler characteristic} of an $\mathcal{S}$-constructible function $\varphi$ is the Euler-Poincar\'e characteristic of $X$ weighted by $\varphi$, that is,
\be \chi(\varphi):=\sum_{S \in \mathcal{S}} \chi(S) \cdot \varphi(S),
\ee
where $\varphi(S)$ denotes the (constant) value of $\varphi$ on the stratum $S$.

A fundamental role in the formulation of our main result is played by the {\it local Euler obstruction} function $$\Eu_X:X \to \bZ,$$
which is an essential ingredient in MacPherson's definition of Chern classes for singular varieties \cite{M74}. 
The precise definition of the local Euler obstruction function is not needed in this paper, but see, e.g., \cite[Section 4.1]{Di} for an introduction. We only list here properties of the local Euler obstruction function that are relevant in this paper:
\begin{enumerate}[label=(\roman{*}), ref=\roman{*}]
\item $\Eu_X$ is constant along the strata of a fixed Whitney stratification $\mathcal{S}$ of $X$, i.e., $\Eu_X$ is $\mathcal{S}$-constructible.
\item If $x \in X$ 	is a smooth point then $\Eu_X(x)=1$.
\item If $X=\cup_i X_i$ is the decomposition of $X$ into irreducible components, then $\Eu_X(x)=\sum \Eu_{X_i}(x)$, where the sum is over all irreducible components that pass through the point $x$.
\item If $(X,x)$ is an isolated singularity germ, then $\Eu_X(x)=\chi(CL(X,x)),$ where $CL(X,x)$ is the complex link of $x$ in $X$.
\item If $X$ is a curve, then $\Eu_X(x)$ is the multiplicity of $X$ at $x$.
\item\label{p6} The Euler obstruction function is preserved under generic hyperplane sections. More precisely, suppose $X$ is a projective variety. If $S\subset X$ is a stratum in a fixed Whitney stratification of $X$ of positive dimension, then for a general hyperplane $H$, the value of $\Eu_X$ along $S$ is equal to the value of $\Eu_{X\cap H}$ along $S\cap H$. 
\item The Euler obstruction function is an analytic invariant. More precisely, given two varieties $X$ and $X'$, and points $x\in X$, $x'\in X'$, if the analytic germ of $X$ at $x$ is isomorphic to the analytic germ of $X'$ at $x'$, then $\Eu_X(x)=\Eu_{X'}(x')$. In particular, if $U$ is a Zariski open set in $X$, then $\Eu_U=\Eu_X|_{U}$. 
\end{enumerate}

We should also point out that the local Euler obstruction function is not motivic, in the sense that, if $Y$ is a closed subvariety of $X$, then in general one has: \be\label{ad}\Eu_X\neq\Eu_Y + \Eu_{X \setminus Y}.\ee For example, just consider $X$ a singular curve with only one singular point $Y$ which is a double point. However, if $Y$ is a generic hyperplane section of $X$, then one has an equality in (\ref{ad}) by property (\ref{p6}). This fact is used in the following result, see \cite[Equation~(2)]{STV} and also \cite[Theorem 1.2]{TS}:
\begin{theorem}\cite{STV}\cite{TS}\label{theorem_top_singular}
	Let $X$ be an irreducible  closed subvariety in $\C^n$. Let $l: \C^n\to \C$ be a general linear function, and let $H_c$ be the hyperplane in $\C^n$ defined by the equation $l=c$ for a general $c\in \C$. Then
	the number of critical points of $l|_{X_{\reg}}$ is equal to $(-1)^{\dim X}\chi({\rm Eu}_{X}|_{U_c})$, where $U_c=X\setminus H_{c}$ and ${\rm Eu}_X$ is the local Euler obstruction function on $X$.
\end{theorem}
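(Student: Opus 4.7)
The plan is to prove Theorem \ref{theorem_top_singular} via a stratified Morse-theoretic argument that localizes the weighted Euler characteristic at the critical points of $l|_{X_{\reg}}$. First I would fix a Whitney stratification $\mathcal{S}$ of $X$ such that $X_{\reg}$ is the open stratum and ${\rm Eu}_X$ is $\mathcal{S}$-constructible. By a standard Bertini-type argument applied to the affine pencil of linear forms, for general $l$ the restriction $l|_{X_{\reg}}$ has only finitely many critical points, each of Morse type (isolated, holomorphically non-degenerate), and for general $c \in \C$ the hyperplane $H_c$ meets every stratum of $\mathcal{S}$ transversally, so $U_c = X \setminus H_c$ contains all those critical points.

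The crux of the argument is the localization identity
\[
\chi\bigl({\rm Eu}_X|_{U_c}\bigr) \;=\; \sum_{p} \chi\bigl(\phi_{l-l(p)} \mathcal{F}\bigr)_{p},
\]
where $\mathcal{F}$ is a constructible complex on $X$ with pointwise Euler characteristic equal to ${\rm Eu}_X$, $\phi$ denotes the vanishing cycle functor, and $p$ runs over the stratified critical points of $l|_X$ inside $U_c$. This is essentially a Kashiwara-Dubson type index formula, combined with the fact that the characteristic cycle of ${\rm Eu}_X$ is, up to sign, the conormal variety $T^{*}_{X}\C^n$; away from critical points $l$ defines a stratified topological fibration, and since $\chi(\C\setminus\{c\})=0$ the non-critical contribution to $\chi({\rm Eu}_X|_{U_c})$ vanishes. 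By genericity, the stratified critical points reduce to the Morse critical points on $X_{\reg}$.

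A direct local computation then completes the proof. At a Morse critical point $p \in X_{\reg}$ the holomorphic Morse lemma supplies local coordinates in which $l - l(p) = z_1^2 + \cdots + z_d^2$ with $d = \dim X$, whose Milnor fiber is homotopy equivalent to the real sphere $S^{d-1}$. Applying additivity of the Euler characteristic in a small ball $B$ around $p$ yields
\[
\chi\bigl(B \setminus l^{-1}(c)\bigr) \;=\; \chi(B) - \chi(S^{d-1}) \;=\; 1 - \bigl(1+(-1)^{d-1}\bigr) \;=\; (-1)^{d}.
\]
Since ${\rm Eu}_X(p)=1$ at each smooth critical point, each of the $N$ critical points contributes $(-1)^{\dim X}$, giving $\chi({\rm Eu}_X|_{U_c}) = (-1)^{\dim X}\, N$, equivalently $N = (-1)^{\dim X}\chi({\rm Eu}_X|_{U_c})$, as required.

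The main obstacle will be the localization step, since $X \subset \C^n$ is typically noncompact and one has to exclude spurious contributions coming from ``critical points at infinity'' that could inflate $\chi({\rm Eu}_X|_{U_c})$. The standard workaround, carried out in \cite{STV,TS}, is either to verify a Malgrange-type tameness condition at infinity for generic $l$, or to compactify $X$ inside $\bP^n$ and carefully track the contributions of the hyperplane at infinity together with $H_c$; either route reduces the global identity to the purely local Milnor-type computation above.
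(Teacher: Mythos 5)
The paper does not actually prove Theorem \ref{theorem_top_singular}: it is imported from \cite{STV} and \cite{TS} (see the pointer to \cite[Equation (2)]{STV} and \cite[Theorem 1.2]{TS}), so there is no in-paper argument to compare against. Your sketch reconstructs what is essentially the Sch\"urmann--Tib\u{a}r style proof: push $\Eu_X$ forward along $l$, use additivity of $\chi$ and $\chi(\C\setminus\{c\})=0$ to reduce $\chi(\Eu_X|_{U_c})$ to a sum of local jumps $\chi(\Eu_X|_{l^{-1}(t)})-\chi_{\mathrm{gen}}$ over the atypical values $t$, and evaluate each jump by the local Milnor computation $1-\chi(S^{d-1})=(-1)^{d}$ at the Morse points. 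That architecture is correct, and the local computation is right.

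Two points need sharpening. First, the sentence ``by genericity, the stratified critical points reduce to the Morse critical points on $X_{\reg}$'' is false as stated: every zero-dimensional stratum is a stratified critical point of any function, and general $l$ still has stratified critical points on positive-dimensional singular strata. What is true, and what your argument actually needs, is that the \emph{index contributions} of such points vanish: since $CC(\Eu_X)=(-1)^{\dim X}\big[\overline{T^*_{X_{\reg}}\C^n}\big]$ and the part of this $n$-dimensional conic Lagrangian lying over $X_{\sing}$ has dimension at most $n-1$, a general constant covector $dl$ misses it, so the Kashiwara--Dubson local index at every point of $X_{\sing}$ is zero. This is exactly why $\Eu_X$, rather than $1_X$, appears in the formula, and it deserves an explicit sentence rather than being absorbed into ``by genericity''. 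Second, you correctly identify the absence of jumps at infinity as the remaining obstacle; for a linear $l$ all fibers $\{l=t\}$ share the same axis at infinity, and choosing $l$ so that this axis is transverse to the induced stratifications of $\overline{X}$ and $\overline{X}\setminus X$ at infinity (which is precisely the genericity condition recorded in Remark \ref{remark_transversal}) yields local triviality at infinity uniformly in $t$. With those two points spelled out, your argument is complete and coincides with the intended proof of the cited result.
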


\begin{remark}\label{remark_transversal}
	The condition of being general in the above theorem can be made precise as follows. The linear function $l$ and the number $c$ are general, if $\overline{H_c}$ in $\bP^n$ intersects $\overline{X}$ as well as $\overline{X}\setminus X$ transversally in the stratified sense, where $\overline{H_c}$ and $\overline{X}$ denote the closure of $H_c$ and $X$ in $\bP^n$, respectively. 
\end{remark}

\section{Euclidean distance degree of a projective variety}\label{sec:pdeg}

Let $X$ be an irreducible closed subvariety of $\bP^{n}$.
Recall from the Introduction that the {\it (projective) Euclidean distance degree} of $X$, denoted here by $\PEDdeg(X)$ (to emphasize the fact that $X$ is projective), is defined as 
\[
\PEDdeg(X)=\EDdeg(C(X)),
\]
where $C(C)$ is the affine cone on $X$ in $\bC^{n+1}$.

The first result of this section, Proposition \ref{prop_PEDequal}, gives am interpretation of the ED degree of a projective variety in terms of the number of the degeneration points of a certain rational $1$-form. Let $[x_0: x_1: \ldots: x_n]$ be the projective coordinates of $\bP^n$. Given any $\bbeta=(\beta_0, \beta_1, \ldots, \beta_n)\in \bC^{n+1}\setminus \{0\}$, we define the rational function $h_\bbeta$ on $\bP^n$ by
$$h_\bbeta:=\frac{(\beta_0 x_0+\beta_1 x_1+\cdots+\beta_n x_n)^2}{x_0^2+x_1^2+\cdots+x_n^2},$$
and we define $$\sU_{\bbeta}:=\bP^n\setminus (Q \cup H_{\bbeta})$$ to be the complement of the \emph{isotropic quadric}
$Q:=\{[x_0:\ldots:x_n]\mid x_0^2+x_1^2+\cdots+x_n^2=0\}$ and the hyperplane $H_{\bbeta}:=\{[x_0:\ldots:x_n]\mid \beta_0 x_0+\beta_1 x_1+\cdots+\beta_n x_n=0\}$ in $\bP^n$. With the above notation, we have the following:
\begin{proposition}\label{prop_PEDequal}
Let $X$ be a proper nonempty irreducible subvariety of $\bP^{n}$ which is not contained in the isotropic quadric  $Q$.
For general $\bbeta\in \bC^{n+1}$, the number of the degeneration points of $(d\log h_\bbeta)|_{X_{\reg}}$ is equal to $\PEDdeg(X)$. 
\end{proposition}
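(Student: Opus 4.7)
The plan is to identify the degeneration locus of $d\log h_\bbeta$ on $X_{\reg}$ with the affine critical locus of $f_\bbeta$ on $C(X)_{\reg}$, via the cone projection $\pi\colon C(X)\setminus\{0\}\to X$. Writing $l(x)=\sum_i \beta_i x_i$ and $q(x)=\sum_i x_i^2$, I would start from the computation
\[
d\log h_\bbeta \;=\; \tfrac{2}{l}\,dl \,-\, \tfrac{1}{q}\,dq,
\]
which is a scale-invariant rational 1-form on $\bC^{n+1}\setminus\{0\}$ (it annihilates the Euler vector field $\sum_i x_i\,\partial_{x_i}$) and therefore descends to a rational 1-form on $\bP^n$ with poles on $H_\bbeta \cup Q$. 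At a lift $\tilde x \in C(X)_{\reg}\setminus (Q\cup H_\bbeta)$ of $[x]\in X_{\reg}\cap \sU_\bbeta$, the pullback $\pi^*(d\log h_\bbeta)$ paired against $v\in T_{\tilde x}C(X)$ gives
\[
\bigl(\pi^* d\log h_\bbeta\bigr)_{\tilde x}(v) \;=\; \frac{2}{l(\tilde x)\,q(\tilde x)}\,\bigl\langle\, q(\tilde x)\,\bbeta - l(\tilde x)\,\tilde x,\;v\,\bigr\rangle,
\]
so $[x]$ is a degeneration point of $(d\log h_\bbeta)|_{X_{\reg}}$ precisely when $q(\tilde x)\,\bbeta - l(\tilde x)\,\tilde x \in T_{\tilde x}C(X)^{\perp}$.

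I would next exploit the conicality of $C(X)$: a point $\tilde y\in C(X)_{\reg}$ is a critical point of $f_\bbeta$ iff $\tilde y-\bbeta \in T_{\tilde y}C(X)^{\perp}$, and since $\tilde y\in T_{\tilde y}C(X)$ this automatically forces the scalar identity $q(\tilde y)=l(\tilde y)$, in particular $\tilde y\notin Q$ and $\tilde y\notin\{l=0\}$. Matching the two characterizations, the bijection goes as follows: to a critical point $\tilde y$ I associate the projective image $[\tilde y]\in X_{\reg}$, which is a degeneration point by $q(\tilde y)\,\bbeta - l(\tilde y)\,\tilde y = q(\tilde y)(\bbeta-\tilde y)\in T_{\tilde y}C(X)^{\perp}$; conversely, to a degeneration point $[x]$ with an arbitrary lift $\tilde x$ I associate the rescaled lift $\tilde y := \tfrac{l(\tilde x)}{q(\tilde x)}\,\tilde x$, for which conicality gives $T_{\tilde y}C(X)=T_{\tilde x}C(X)$ and $l(\tilde y)=q(\tilde y)$, and dividing the degeneration equation by $q(\tilde x)$ yields $\bbeta-\tilde y \in T_{\tilde y}C(X)^{\perp}$, i.e.\ $\tilde y$ is a critical point of $f_\bbeta$. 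The two assignments are mutually inverse, since the constraint $q=l$ picks out a unique nonzero scalar factor on each line through $0$ (given $q,l\ne 0$).

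The remaining step is a genericity argument. The hypothesis $X\not\subset Q$ ensures that $\EDdeg(C(X))$ is a well-defined finite number, and a standard dimension count for the ED correspondence $\{(\tilde y,\bbeta)\in C(X)_{\reg}\times\bC^{n+1}\colon \tilde y-\bbeta \in T_{\tilde y}C(X)^{\perp}\}$ shows that for general $\bbeta$ none of the critical points of $f_\bbeta$ lies on $Q$ or $H_\bbeta$, and none coincides with the cone apex $0$ (automatic whenever $C(X)$ is singular there); equivalently, each projective image lies in $X_{\reg}\cap\sU_\bbeta$. The hard part will be securing this genericity step, especially ruling out critical points on $C(X)\cap Q$ and at the cone singularity; once in place, the bijection gives $\PEDdeg(X)=\EDdeg(C(X))$ equal to the number of degeneration points of $(d\log h_\bbeta)|_{X_{\reg}}$.
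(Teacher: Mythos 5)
Your identification of the degeneration points of $(d\log h_\bbeta)|_{X_{\reg}}$ with the critical points of $f_\bbeta|_{C(X)_{\reg}}$ is correct, and this half of your argument is actually cleaner than the paper's version of the same step: the observation that conicality forces $q(\tilde y)=l(\tilde y)$ at every critical point both produces the unique rescaled lift $\tilde y=\frac{l(\tilde x)}{q(\tilde x)}\tilde x$ and shows that no two critical points can lie on the same line through the origin, whereas the paper verifies the equivalence by restricting the form to an auxiliary affine hyperplane through the candidate point and asserts the ``no two critical points differ by a scalar'' fact separately by genericity.

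However, there is a genuine gap, and you have flagged it yourself: the claim that for general $\bbeta$ no critical point of $f_\bbeta|_{C(X)_{\reg}}$ lies on $\{l_\bbeta=0\}$ (equivalently, by $q=l$, on the cone over $Q$) is not ``a standard dimension count.'' The dimension count --- the incidence variety $\sZ=\{(\tilde y,\bbeta): \tilde y \text{ critical for } f_\bbeta|_{C(X)_{\reg}}\}$ is an irreducible variety of dimension $n+1$, so if the locus $\sZ_0$ of pairs with $l_\bbeta(\tilde y)=0$ is a \emph{proper} closed subset then its image in $\bC^{n+1}$ misses a general $\bbeta$ --- only finishes the job once one knows $\sZ_0\neq\sZ$, and that is exactly where the hypothesis $X\not\subset Q$ must enter. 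The paper's argument for this is the substantive half of its proof: if $\sZ_0=\sZ$, then for every smooth point $\tilde y$ of $C(X)$ and every normal vector $v$ the function $f_{\tilde y+v}$ has $\tilde y$ as a critical point lying on the corresponding hyperplane, so $\langle \tilde y+v,\tilde y\rangle=0$; since $v$ may be replaced by any scalar multiple, this forces $\langle\tilde y,\tilde y\rangle=0$, i.e.\ $C(X)_{\reg}$ lies in the cone over $Q$, contradicting $X\not\subset Q$. Without an argument of this kind your proof is incomplete at precisely the point you call ``the hard part.''
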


\newcommand{\bq}{\mathbf{q}} %
\newcommand{\bv}{\mathbf{v}} %
\newcommand{\bx}{\mathbf{x}} %
\newcommand{\bqbar}{\bar\bq}
\newcommand{\bxbar}{\bar\bx}

\begin{proof}
The assertion is a consequence of the following two statements:
\begin{enumerate}
\item\label{item_eq1} For general $\bbeta$, none of the critical points of $f_{\bbeta}|_{C(X)_{\reg}}$ lies in the hyperplane $
H_{\bbeta}=\{ [x_0:\dots:x_n] : \beta_0 x_0+\beta_1 x_1+\cdots+\beta_n x_n=0\}$, where 
\[ 
f_\bbeta(x_0,x_1,\ldots,x_n)=\sum_{i=0}^n(x_i-\beta_i)^2
\]
is the squared distance function. 
\item\label{item_eq2} For general $\bbeta$, a point $\bqbar=[q_0: q_1: \ldots: q_n]\in \bP^{n}$ is a degeneration point of $(d\log h_\bbeta)|_{X_{\reg}}$ if and only if there exists a lifting $\bq=(q_0, q_1, \ldots, q_n)\in \bC^{n+1}$ of $\bqbar$ that is a critical point of $f_{\bbeta}|_{C(X)_{\reg}}$. 
\end{enumerate}
Notice that for general $\bbeta$, there does not exist a pair of critical points of $f_{\bbeta}|_{C(X)_{\reg}}$ that differ by a nonzero scalar multiplication. Thus, to prove the proposition, it suffices to show the above two statements. 

Define the closed subvariety $\sZ$ of $C(X)_{\reg}\times \bC^{n+1}$ consisting of points $(y, \bbeta)$ such that $y$ is a critical point of $f_\bbeta|_{C(X)_{\reg}}$\footnote{The variety $\sZ$ is an open subvariety of the ED correspondence variety $\sE_X$ defined in \cite{DHOST}.}. By definition, the subvariety $\sZ$ can be realized as the total space of a vector bundle over $C(X)_{\reg}$ through the projection to first factor, where the fiber dimension is equal to $\codim C(X)$. Thus, $\sZ$ is irreducible and of dimension $n+1$. Let $\sZ_0\subset \sZ$ be the closed locus of $\sZ$ corresponding to the critical points that lie in the hyperplane $\beta_0 x_0+\beta_1 x_1+\cdots+\beta_n x_n=0$. Then $\sZ_0$ is a Zariski closed subset of $\sZ$. 

Suppose that, for general $\bbeta$, there exists at least one critical points of $f_{\bbeta}|_{C(X)_{\reg}}$ that lies in the hyperplane $\beta_0 x_0+\beta_1 x_1+\cdots+\beta_n x_n=0$. Then $\dim \sZ_0= n+1$. Since $\sZ$ is irreducible, we have $\sZ_0=\sZ$. Therefore, every critical point of $f_{\bbeta}|_{C(X)_{\reg}}$ lies in the hyperplane $\beta_0 x_0+\beta_1 x_1+\cdots+\beta_n x_n=0$. Notice that for any $\bq\in C(X)_{\reg}$ and any normal vector $\bv=(v_0, v_1, \ldots, v_n)$ to $C(X)$ at $\bq$, the function 
\[
f_{\bq+\bv}(x_0,x_1,\dots,x_n)=\sum_{0\leq i\leq n}(x_i-q_i-v_i)^2
\]
 has a critical point at $\bq$. Thus, we have
 $\bq$ is in the hyperplane defined by 
   $$(q_0+v_0)x_0+(q_1+v_1)x_1+\cdots+ (q_n+v_n)x_n=0,$$
hence 
$$(q_0+v_0)q_0+(q_1+v_1)q_1+\cdots+ (q_n+v_n)q_n=0.$$
Since any scalar multiple of $\bv$ is also a normal vector to $C(X)$ at $\bq$, the above equality implies that 
$$q_0^2+q_1^2+\cdots+q_n^2=0 \text{ and } q_0 v_0+q_1 v_1+\cdots q_n v_n=0.$$
This contradicts the assumption that $X$ is not contained in the isotropic quadric defined by 
$x_0^2+x_1^2+\cdots+x_n^2=0$. Thus, we have proved (\ref{item_eq1}).

Notice that
\begin{equation}\label{eq_dlog}
d\log h_\bbeta=\frac{2\beta_0 dx_0+2\beta_1 dx_1+\cdots+2\beta_n dx_n}{\beta_0 x_0+\beta_1 x_1+\cdots+\beta_n x_n}-\frac{2x_0 dx_0+2x_1dx_1+\cdots+2x_ndx_n}{x_0^2+x_1^2+\cdots+x_n^2}
\end{equation}
which can also be considered as a $\mathbb{C}^*$-equivariant form on $\mathbb{C}^{n+1}\setminus \{0\}$.
 If $\bqbar\in \bP^n$ is a degeneration point of $(d\log h_\bbeta)|_{Y_{\reg}}$, then at any lifting $\bq\in\mathbb{C}^{n+1}\setminus \{0\}$ of $\bqbar$,
 the two cotangent vectors 
$$x_0dx_0+x_1dx_1+\cdots+x_ndx_n \text{ and }\beta_0dx_0+\beta_1 dx_1+\cdots+\beta_n dx_n$$
differ by a unique nonzero scalar. 
Therefore
there exists a unique lifting $\bq\in\mathbb{C}^{n+1}\setminus\{0\}$,
such that 
$$x_0dx_0+x_1dx_1+\cdots+x_ndx_n=\beta_0dx_0+\beta_1 dx_1+\cdots+\beta_n dx_n,$$ 
as cotangent vectors of $C(X)$ at $\bq$, 
or equivalently $\bq$ is a critical point of 
$f_\bbeta |_{C(X)_{\reg}}.$

Conversely, suppose $\bq\in \bC^{n+1}$ is a critical point of $f_\bbeta|_{C(X)_{\reg}}$, that is
\begin{equation}\label{eq_degenerate}
q_0dx_0+q_1dx_1+\cdots+q_ndx_n=\beta_0dx_0+\beta_1 dx_1+\cdots+\beta_n dx_n
\end{equation}
 as cotangent vectors of $C(X)_{\reg}$ at $\bq$. 
We need to show that the $\bC^*$-equivariant form $(d\log h_\bbeta)|_{C(X)_{\reg}}$ degenerates on the line $t\bq=(tq_0,tq_1,\ldots,tq_n)$ parametrized by $t$. Notice that the restriction of $d\log h_\bbeta$ to any line passing through the origin is zero. Thus, it suffices to show that the further restriction of the $1$-form $(d\log h_\bbeta)|_{C(X)_{\reg}\cap H}$ has a degeneration point at $\bq$, where $H$ denotes the hyperplane defined by 
$$
H=\{(x_0, x_1, \ldots, x_n)\in \bC^{n+1}\mid \beta_0(x_0-q_0)+\beta_1(x_1-q_1)+\cdots+\beta_n(x_n-q_n)=0\}. 
$$
Here, notice that by part (\ref{item_eq1}), 
the hyperplane $H$ does not pass through the origin, and hence intersects $C(X)_{\reg}$ transversally. 
When restricting to $C(X)_{\reg}\cap H$, 
the cotangent vector $\beta_0dx_0+\beta_1dx_1+\cdots+\beta_ndx_n$ vanishes. 
Thus, by (\ref{eq_degenerate}), the cotangent vector
 $q_0dx_0+q_1dx_1+\cdots+q_ndx_n$ 
 of $C(X)_{\reg}\cap H$ also vanishes at $\bq$. 
  Therefore, $(d\log h_\bbeta)|_{C(X)_{\reg}\cap H}$ degenerates at $\bq$. 
  Thus, $(d\log h_\bbeta)|_{C(X)_{\reg}}$ degenerates along the line $t\bq$, and equivalently the $1$-form $(d\log h_\bbeta)|_{X_{\reg}}$ degenerates at $\bqbar$. 
\end{proof}

Let us now denote the complement of the isotropic quadric $Q=\{x_0^2+x_1^2+\cdots+x_n^2=0\}$ in $\bP^n$ by $U:=\bP^n \setminus Q$. Denote the affine variety $\{x_0^2+x_1^2+\cdots+x_n^2=1\}\subset \C^{n+1}$ by $V$. 
\begin{lemma}\label{lemma_cover}
	The following algebraic map 
	$$\Phi: V\to U, \quad (x_0, x_1, \ldots, x_n)\mapsto [x_0: x_1:\ldots: x_n]$$
	is a double covering map. 
\end{lemma}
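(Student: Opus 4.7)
The plan is to recognize $\Phi$ as the quotient of $V$ by a free $\mathbb{Z}/2$-action and to verify the covering property via a direct transversality computation. The key involution is $\sigma\colon V\to V$ sending $(x_0,\dots,x_n)\mapsto (-x_0,\dots,-x_n)$, which preserves the defining equation $\sum x_i^2 = 1$, is fixed-point free (a fixed point would force $x=0$, contradicting $\sum x_i^2=1$), and satisfies $\Phi\circ\sigma=\Phi$.

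First, I would verify well-definedness, surjectivity, and the fiber count. Well-definedness is clear: every point of $V$ is nonzero, so it determines a point of $\mathbb{P}^n$, and its image has representative with $\sum x_i^2 = 1 \neq 0$, so lies in $U$. Given $[y_0:\cdots:y_n]\in U$, a lift of the form $\lambda(y_0,\dots,y_n)$ lies in $V$ iff $\lambda^2 = 1/(y_0^2+\cdots+y_n^2)$, and since the right-hand side is a nonzero complex number there are exactly two solutions, differing by sign. Hence $\Phi$ is surjective and each fiber is a single $\sigma$-orbit of size two.

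Next, I would show $\Phi$ is a local biholomorphism. This is the step that carries the only real content, since a set-theoretically $2$-to-$1$ morphism need not be étale. The cleanest argument is transversality: $V$ meets each $\mathbb{C}^*$-orbit of the standard scaling action on $\mathbb{C}^{n+1}\setminus\{0\}$ transversally. Indeed, the Euler vector field $E=\sum x_i\,\partial_{x_i}$ satisfies $E\bigl(\sum x_i^2\bigr)=2\sum x_i^2=2$ along $V$, so the defining function of $V$ is nowhere $\mathbb{C}^*$-invariant. Therefore the restriction of the quotient map $\mathbb{C}^{n+1}\setminus\{0\}\to\mathbb{P}^n$ to $V$ is a submersion of smooth complex manifolds of equal dimension $n$, hence a local biholomorphism.

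Combining the two steps, $\Phi$ is a surjective local biholomorphism whose fibers coincide with the orbits of the free $\mathbb{Z}/2$-action of $\sigma$, so it is a double covering map. The main technical point is the transversality computation in the second step; once that is in place, the rest reduces to the quadratic formula.
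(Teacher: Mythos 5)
Your proof is correct and follows essentially the same route as the paper's: factor $\Phi$ through the quotient by the free $\pm 1$ action, check the fibers have size two, and verify that $\Phi$ induces isomorphisms on tangent spaces (which the paper leaves as "straightforward to check" and you carry out via the Euler vector field computation $E(\sum x_i^2)=2\neq 0$ on $V$). No gaps; you have simply supplied the details the paper omits.
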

\begin{proof}
	The map factors through the quotient map $V\to V/\{\pm 1\}$, and is evidently a set-theoretic 2-to-1 map. It is straightforward to check that $\Phi$ induces isomorphism on the tangent spaces.
\end{proof}

Let $Y:=\Phi^{-1}(X\cap U)$. The following two corollaries are immediate consequences of the preceding lemma. 
\begin{corollary}\label{cor_1}
	The number of the degeneration points of $(d\log l_\bbeta)|_{Y_{\reg}}$ is twice the number of the degeneration points of $(d\log h_\bbeta)|_{X_{\reg}}$, where $l_\bbeta=\beta_0x_0+\beta_1 x_1+\cdots+\beta_n x_n$. 
\end{corollary}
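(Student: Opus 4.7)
The plan is to pull back $d\log h_\bbeta$ along the étale double cover $\Phi$ and exploit the fact that on $V$ the denominator $x_0^2+\cdots+x_n^2$ of $h_\bbeta$ becomes identically $1$. Since $V$ is cut out by $x_0^2+x_1^2+\cdots+x_n^2=1$, the defining formula for $h_\bbeta$ simplifies under pullback to
\[
\Phi^* h_\bbeta \;=\; (\beta_0 x_0+\beta_1 x_1+\cdots+\beta_n x_n)^2 \;=\; l_\bbeta^2,
\]
and hence as rational $1$-forms on $V$,
\[
\Phi^*(d\log h_\bbeta) \;=\; d\log(l_\bbeta^2) \;=\; 2\, d\log l_\bbeta.
\]
Because multiplication by a nonzero scalar does not change the degeneration locus of a $1$-form, the degeneration points of $\Phi^*(d\log h_\bbeta)|_{Y_{\reg}}$ coincide with those of $d\log l_\bbeta|_{Y_{\reg}}$.

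By Lemma \ref{lemma_cover}, $\Phi: V \to U$ is étale and $2$-to-$1$, so it restricts to an étale double cover $Y_{\reg} \to X_{\reg}\cap U$. Since $\Phi$ induces isomorphisms on tangent spaces, a point $\bq\in Y_{\reg}$ is a degeneration point of $\Phi^*(d\log h_\bbeta)|_{Y_{\reg}}$ if and only if $\Phi(\bq)$ is a degeneration point of $d\log h_\bbeta|_{X_{\reg}\cap U}$. Thus each degeneration point of $d\log h_\bbeta$ lying in $X_{\reg}\cap U$ is covered by exactly two degeneration points of $d\log l_\bbeta|_{Y_{\reg}}$.

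To finish, I would verify that no degeneration point of $d\log h_\bbeta|_{X_{\reg}}$ lies on the isotropic quadric $Q$: this is automatic since $h_\bbeta$ has $Q$ as polar locus, so $d\log h_\bbeta$ fails even to restrict regularly to $X_{\reg}\cap Q$ and therefore cannot degenerate there. Combined with Proposition \ref{prop_PEDequal} (which ensures finiteness and the correct total count for general $\bbeta$), this yields the claimed factor of $2$. The argument is essentially a one-line pullback computation together with the étale property of Lemma \ref{lemma_cover}, and I do not anticipate any serious obstacle; the only mild subtlety is the polar-locus check guaranteeing that all relevant degeneration points already lie in $U$ and are thus accounted for by the étale cover $\Phi$.
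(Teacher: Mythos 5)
Your proof is correct and follows essentially the same route as the paper: pull back $h_\bbeta$ along $\Phi$, observe that on $V$ it equals $l_\bbeta^2$ so the two logarithmic forms differ by a nonzero scalar, and then use the \'etale double-cover property from Lemma \ref{lemma_cover} to get the factor of $2$. Your extra remark that degeneration points cannot lie on the polar locus $Q$ is a sensible clarification that the paper leaves implicit.
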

\begin{proof}
	Notice that, as meromorphic functions, 
	$$h_\bbeta\circ \Phi=\frac{(\beta_0 x_0+\beta_1 x_1+\cdots+\beta_n x_n)^2}{x_0^2+x_1^2+\cdots+x_n^2}=l_\bbeta^2$$
	on $V$. Therefore, $(d\log l_\bbeta)|_{Y_{\reg}}$ and $\Phi^*(d\log h_\bbeta)|_{X_{\reg}}$ only differ by a nonzero scalar. Thus the assertion follows from Lemma \ref{lemma_cover}. 
\end{proof}
\begin{corollary}\label{cor_2}
	Under the above notations, 
	\begin{equation}\label{eq_2}
	\chi({\rm Eu}_{Y}|_{\sV_\bbeta})=2\chi({\rm Eu}_X|_{\sU_\bbeta})
	\end{equation}
	where $\sV_\bbeta$ is the complement of the hyperplane $\{l_\bbeta=0\}$ in $\C^{n+1}$. 
\end{corollary}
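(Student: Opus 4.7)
The plan is to show that $\Phi$ restricts to an étale double covering $Y\cap \sV_\bbeta \to X\cap \sU_\bbeta$ and then pull back the stratified computation of the Euler characteristic weighted by the Euler obstruction function.

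First I would verify that $\Phi^{-1}(\sU_\bbeta)=V\cap \sV_\bbeta$, so that $Y\cap \sV_\bbeta=\Phi^{-1}(X\cap \sU_\bbeta)$. Indeed, a point $(x_0,\dots,x_n)\in V$ lies in $\Phi^{-1}(H_\bbeta)$ exactly when $\beta_0 x_0+\cdots+\beta_n x_n=0$, i.e., in $V\cap\{l_\bbeta=0\}$; and $\Phi^{-1}(Q)=\emptyset$ because $x_0^2+\cdots+x_n^2=1$ on $V$. Combined with Lemma \ref{lemma_cover}, this makes the restriction
\[
\Phi\colon Y\cap \sV_\bbeta\;\longrightarrow\; X\cap \sU_\bbeta
\]
a surjective étale 2-to-1 covering map.

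Next, because $\Phi$ is a local analytic isomorphism, the analytic germ of $Y$ at any $y$ is isomorphic to the analytic germ of $X\cap U$ at $\Phi(y)$. Property (vii) of the local Euler obstruction then yields
\[
\Eu_Y(y)=\Eu_{X\cap U}(\Phi(y))=\Eu_X(\Phi(y))\qquad\text{for all } y\in Y,
\]
where in the last equality I used property (vii) again to identify $\Eu_{X\cap U}$ with the restriction $\Eu_X|_{X\cap U}$.

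Now fix a Whitney stratification $\mathcal{S}$ of $X$ whose trace on $X\cap \sU_\bbeta$ is a Whitney stratification $\{S_\alpha\}$, along which $\Eu_X$ is constant. Pulling back through the étale map $\Phi$ gives a Whitney stratification of $Y\cap \sV_\bbeta$ whose strata are the connected components of the $\Phi^{-1}(S_\alpha)$, and along each such component the function $\Eu_Y$ takes the constant value $\Eu_X(S_\alpha)$ by the previous step. Since $\Phi^{-1}(S_\alpha)\to S_\alpha$ is an unramified double cover of topological spaces, one has $\chi(\Phi^{-1}(S_\alpha))=2\chi(S_\alpha)$ (regardless of whether the cover is connected or splits). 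Summing up,
\[
\chi(\Eu_Y|_{\sV_\bbeta})=\sum_\alpha \chi(\Phi^{-1}(S_\alpha))\cdot\Eu_X(S_\alpha)
=2\sum_\alpha \chi(S_\alpha)\cdot\Eu_X(S_\alpha)=2\chi(\Eu_X|_{\sU_\bbeta}),
\]
which is exactly (\ref{eq_2}). The only nontrivial input is the identification $\Eu_Y=\Eu_X\circ\Phi$, and this is essentially automatic once one invokes the analytic invariance (vii) together with the étaleness of $\Phi$ coming from Lemma \ref{lemma_cover}; after that the argument is a routine stratified Euler-characteristic computation.
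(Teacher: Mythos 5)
Your argument is correct and follows essentially the same route as the paper's own proof: the restriction of $\Phi$ to an unramified double cover $Y\cap \sV_\bbeta\to X\cap \sU_\bbeta$, the identification $\Eu_Y=\Eu_X\circ\Phi$ via analytic invariance, and the doubling of the weighted Euler characteristic. You simply spell out the stratum-by-stratum bookkeeping that the paper leaves implicit.
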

\begin{proof}
	The double covering map $\Phi: V\to U$ induces a double covering map $Y\cap \sV_\bbeta\to X\cap \sU_\bbeta$. Since the Euler obstruction function is an analytic invariant, the pullback of ${\rm Eu}_X|_{\sU_\bbeta}$ is equal to ${\rm Eu}_{Y}|_{\sV_\bbeta}$. Hence equation (\ref{eq_2}) follows. 
\end{proof}

\begin{proposition}\label{prop_Y}
	In the above notations, the number of the critical points of $l_\bbeta|_{Y_{\reg}}$ is equal to $(-1)^{\dim Y}\chi({\rm Eu}_Y|_{\sV_\bbeta})$. 
\end{proposition}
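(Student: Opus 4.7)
The plan is to apply Theorem~\ref{theorem_top_singular} directly to $Y$, viewed as a closed subvariety of $\bC^{n+1}$ (which is legitimate since $V$ is closed in $\bC^{n+1}$), taking $l=l_\bbeta$ as the general linear function and $c=0$ as the constant. Once this is justified, the conclusion of Theorem~\ref{theorem_top_singular} is exactly the statement of Proposition~\ref{prop_Y}.

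First I would dispatch the issue that $Y$ need not be irreducible. Since $\Phi:V\to U$ is an unramified double cover by Lemma~\ref{lemma_cover} and $X\cap U$ is irreducible (being a nonempty Zariski-open subset of the irreducible variety $X$, using $X\not\subset Q$), $Y=\Phi^{-1}(X\cap U)$ is either itself irreducible or decomposes as a disjoint union $Y=Y_1\sqcup Y_2$ of two irreducible components, each mapped isomorphically onto $X\cap U$ by $\Phi$ and interchanged by $x\mapsto -x$. Every component has dimension $\dim X=\dim Y$, and in the disjoint case property (iii) of the local Euler obstruction gives $\chi(\Eu_Y|_{\sV_\bbeta})=\chi(\Eu_{Y_1}|_{\sV_\bbeta})+\chi(\Eu_{Y_2}|_{\sV_\bbeta})$, while the critical locus of $l_\bbeta|_{Y_{\reg}}$ splits accordingly. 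Thus it suffices to prove the equality on each irreducible component in turn.

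Next I would verify the genericity hypothesis of Theorem~\ref{theorem_top_singular}, as made precise in Remark~\ref{remark_transversal}: for general $\bbeta$, the projective closure $\overline{\{l_\bbeta=0\}}\subset\bP^{n+1}$ should meet $\overline{Y}$ and $\overline{Y}\setminus Y$ transversally in the stratified sense. In homogeneous coordinates $[z_0:z_1:\cdots:z_{n+1}]$ on $\bP^{n+1}$ with affine chart $z_0\neq 0$ and $x_i=z_{i+1}/z_0$, that closure is the hyperplane $\{\sum_{i=0}^n\beta_i z_{i+1}=0\}$; as $\bbeta$ varies in $\bC^{n+1}\setminus\{0\}$ these hyperplanes form a linear system whose unique basepoint in $\bP^{n+1}$ is $p=[1:0:\cdots:0]$. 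Since $\overline{V}=\{z_1^2+\cdots+z_{n+1}^2=z_0^2\}$ does not contain $p$, neither does $\overline{Y}\subset\overline{V}$. Hence the linear system is basepoint-free on $\overline{Y}$, and a standard stratified Bertini argument delivers the required transversality for general $\bbeta$ against any fixed Whitney stratification of $\overline{Y}$ refining $\overline{Y}\setminus Y$.

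With both points settled, Theorem~\ref{theorem_top_singular} applied componentwise with $l=l_\bbeta$ and $c=0$ yields the identity claimed in Proposition~\ref{prop_Y}. The only mildly delicate steps are the stratified Bertini verification (routine once the basepoint is identified and shown to lie off $\overline{Y}$) and the additivity bookkeeping across the irreducible components of $Y$; neither poses a conceptual obstacle.
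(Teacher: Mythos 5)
Your proof is correct and follows essentially the same route as the paper: reduce to Theorem~\ref{theorem_top_singular} via Remark~\ref{remark_transversal}, and verify the stratified transversality by observing that the linear system of hyperplanes $\overline{\{l_\bbeta=0\}}$ has its unique base point at the origin, which does not lie on $\overline{Y}\subset\overline{V}$, so Bertini applies. Your extra care about the possible reducibility of $Y$ (Theorem~\ref{theorem_top_singular} is stated for irreducible varieties, and the paper's proof silently skips this point) is handled correctly by your componentwise additivity argument.
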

\begin{proof}
	By Theorem \ref{theorem_top_singular} and Remark \ref{remark_transversal}, it suffices to show that the closure of $\{l_\bbeta=0\}$ in $\bP^{n+1}$ intersects $\overline{Y}$ as well as $\overline{Y}\setminus Y$ transversally. As $\bbeta$ varies, the closure of $\{l_\bbeta=0\}$ form divisors in a linear system. Since the linear system has only one base point, which is the origin of $\bC^{n+1}$, and since $Y$ is contained in the affine hypersurface $\{z_0^2+z_1^2+\cdots+z_n^2=1\}$, the transversal condition follows from Bertini's theorem.
\end{proof}

\begin{proof}[Proof of Theorem \ref{edpsi}]
	By Proposition \ref{prop_PEDequal}, it suffices to show that the number of the degeneration points of $(d\log h_\bbeta)|_{X_{\reg}}$ is equal to $(-1)^{\dim X}\chi\big({\rm Eu}_X|_{\mathcal{U}_\bbeta}\big)$. By Corollary \ref{cor_1} and Corollary \ref{cor_2}, it suffices to show the following equality:
	\begin{equation}\label{eq_last}
	\#\big\{\text{degeneration points of } (d\log l_\bbeta)|_{Y_{\reg}}\big\} =  (-1)^{\dim Y}\chi({\rm Eu}_Y|_{\sV_\bbeta}).
	\end{equation}
	By the proof of Proposition \ref{prop_Y}, we know that $\{l_\bbeta=0\}$ intersects $Y$ transversally, which implies that $0$ is not a critical value of $l_\bbeta|_{Y_{\reg}}$. Therefore, the number of the degeneration points of $(d\log l_\bbeta)|_{Y_{\reg}}$ is equal to the number of the critical points of $l_\bbeta|_{Y_{\reg}}$, and hence equality (\ref{eq_last}) holds. This completes the proof of the theorem. 	
\end{proof}


\section{Examples}\label{sec:ex}
In this section, we show by examples how to compute the ED degree of singular projective varieties by using Theorem \ref{edpsi}.  Throughout this section, we denote the isotropic quadric by $Q$,
 a general hyperplane defined by    $\beta_0 x_0+\beta_1 x_1+\ldots + \beta_nx_n=0$ by
$H_{\bbeta}$, and we set $\sU_\bbeta=\bP^n\setminus(Q\cup H_\bbeta)$.

\begin{ex}
	Let $X$ be the nodal curve in $\bP^2$ defined by $$x_0^2 x_2-x_1^2(x_1+x_2)=0.$$ The only singular point of $X$ is $[0:0:1]$. For curves, the value of Euler obstruction function is equal to the multiplicity. Therefore, $\Eu_X$ is equal to $1$ on the smooth locus $X_{\reg}$ of $X$, and it equals $2$ at the singular point. It is straightforward to check that $X$ intersects the isotropic quadric $Q$ transversally at $6$ points. Since $X$ is of degree $3$, it intersects $H_\bbeta$ transversally at $3$ points. Notice that $X_{\reg}$ is isomorphic to $\bC^*$. Therefore, by inclusion-exclusion, we get:
    $$\chi\big(X_{\reg}\cap \sU_\bbeta\big)=0-9=-9.$$
    Thus, 
    $$\chi(\Eu_X|_{\sU_\bbeta})=(-9)+2=-7$$
    and hence by Theorem \ref{edpsi} we get $\PEDdeg(X)=7$.
\end{ex}

\begin{ex}
	Let $X$ be the nodal curve  in $\bP^2$ defined by $$x_0^2 x_1-\big(x_1-\sqrt{-1}x_2\big)^2 x_2=0.$$ The singular point of $X$ is $[0 : 1 : \sqrt{-1}]$, which is contained in the isotropic quadric $Q$. Then $X\cap Q$ consists of $5$ points, and $X\cap H_\bbeta$ consists of $3$ points. Therefore, 
	$X\cap \sU_\bbeta$ is smooth and $\chi(X\cap \sU_\bbeta)=7$. Thus, $\PEDdeg(X)=7$ by Theorem \ref{edpsi}. 
\end{ex}

\begin{ex}
Let $X$ be the nodal curve in $\bP^2$ defined by $$x_0^3-(\sqrt{-1}x_0^2+x_1^2)x_2=0.$$ Its singular point is $[0:0:1]$, which is not on $Q$. The two curves $X$ and $Q$ intersects tangentially at $[1:0: -\sqrt{-1}]$, and they intersect transversally at $4$ more points. We therefore get $$\chi(\Eu_X|_{\sU_\bbeta})=\chi(X\cap \sU_\bbeta)+1=-7+1=-6.$$ Thus, $\PEDdeg(X)=6$ by Theorem \ref{edpsi}. 
\end{ex}

\begin{ex}
	Let $X\subset \bP^3$ be the surface defined by $$x_0^2x_1-x_2x_3^2=0.$$ The singular locus $X_{\sing}$ of $X$ is defined by $x_0=x_3=0$. On either of the affine charts $x_1\neq 0$ and $x_2\neq 0$, $X$ is isomorphic to the Whitney umbrella $\{x^2=y^2z\}\subset \bC^3$. It is well-known that the Whitney umbrella $\{x^2=y^2z\}\subset \bC^3$ has a Whitney stratification with three strata: $\{(0, 0, 0)\}$, $\{x=y=0\}\setminus\{(0, 0, 0)\}$ and $\{x^2=y^2z\}\setminus \{x=y=0\}$. Therefore, $X$ has a Whitney stratification with three strata $S_3:=\{[0:1:0:0], [0:0:1:0]\}$, $S_2:=\{x_0=x_3=0\}\setminus S_3$ and $S_1=X\setminus \{x_0=x_3=0\}$. 
	
	The Euler obstruction function of the Whitney umbrella is well-known (see e.g. \cite[Example 4.3]{RW18}). More precisely, $\Eu_X$ has value $1, 2$ and 1 along $S_1$, $S_2$ and $S_3$, respectively. Therefore, 
\begin{equation}\label{eq_twosum}
\chi(\Eu_X|_{\sU_\bbeta})=\chi(X\cap \sU_\bbeta)+\chi(S_2\cap \sU_\bbeta).
\end{equation}
We first compute $\chi(X\cap \sU_\bbeta)$.  By the inclusion-exclusion principle, we have: 
\begin{equation}\label{eq_sum}
\chi(X\cap \sU_\bbeta)=\chi(X)-\chi(X\cap Q)-\chi(X\cap H_\bbeta)+\chi(X\cap Q\cap H_\bbeta).
\end{equation}
Notice that we can define a $\bC^*$-action on $X$ by: 
$$t\cdot [x_0:x_1:x_2:x_3]\longmapsto [x_0:tx_1:tx_2:x_3].$$
 The fixed point locus of this action is $\{x_1=x_2=0\}\cup \{x_0=x_3=0\}$. Therefore, by the localization principle, we get that
$$\chi(X)=\chi\big( \{x_1=x_2=0\}\cup \{x_0=x_3=0\} \big)=4.$$
Consider now the projection $p: \bP^3\dashrightarrow\bP^2$ defined by $[x_0:x_1:x_2:x_3]\mapsto [x_1:x_2:x_3]$. The indeterminate locus of the above projection is the point $[1;0;0;0]$, which is not contained in $X\cap Q$. Therefore, $p$ restricts to a regular map $$p_{X\cap Q}: X\cap Q\lra \bP^2.$$ The image of $p_{X\cap Q}$ is equal to $\{(x_1^2+x_2^2+x_3^2)^2x_1+x_2x_3^2=0\}\subset \bP^2$, which is a smooth cubic curve. The ramification locus is defined by $x_0=0$, and hence the map $p_{X\cap Q}$ ramifies over $\{x_1^2+x_2^2+x_3^2=(x_1^2+x_2^2+x_3^2)^2x_1+x_2x_3^2=0\}$. So $p_{X\cap Q}$ ramifies over the $4$ points $[1:\pm\sqrt{-1}:0]$ and $[1:0:\pm\sqrt{-1}]$. Therefore, $X\cap Q$ is a degree two cover of an elliptic curve with $4$ ramification points.  Hence,
$$\chi(X\cap Q)=-4.$$
The intersection $X\cap H_\bbeta$ is a plane nodal cubic, hence
$$\chi(X\cap H_\bbeta)=1.$$
By B\'ezout's theorem, $X\cap Q\cap H_\bbeta$ consists of $6$ points, and hence
$$\chi(X\cap Q\cap H_\bbeta)=6.$$
Plugging everything back in equation (\ref{eq_sum}), we have \be\label{ref1}\chi(X\cap \sU_\bbeta)=4-(-4)-1+6=13.\ee 
Let us next compute $\chi(S_2\cap \sU_\bbeta)$. Notice that the closure of $S_2$ is equal to the line $S_2\cup S_3=\{x_0=x_3=0\}$ in $\bP^3$. This line intersects $Q\cup H_\bbeta$ at $3$ points, which are disjoint from $S_3$. Thus, $S_2$ is isomorphic to $\bP^1$ minus $5$ points. Hence,
\be\label{ref2} \chi(S_2\cap \sU_\bbeta)=-3.\ee
Plugging (\ref{ref1}) and (\ref{ref2}) into equation (\ref{eq_twosum}), we get by Theorem \ref{edpsi} that
 $$\PEDdeg(X)=\chi(\Eu_X|_{\sU_\bbeta})=10.$$

\end{ex}

\begin{remark}
These examples are checked using techniques from symbolic computation \cite{CLO2015} and numerical algebraic  geometry \cite{bertinibook} using 
\texttt{Macaulay2} \cite{M2} and $\texttt{Bertini}$ \cite{Bertini4M2,bertinibook}.
Our code is available at the following directory. 
\begin{quote}
\url{https://www.math.wisc.edu/~jose/r/ComputingEDDegree.zip}
\end{quote}

\end{remark}

\bibliographystyle{abbrv}
\bibliography{REF_ED_Degree}

\begin{thebibliography}{10}

\bibitem{AH}
P.~{Aluffi} and C.~{Harris}.
\newblock {The Euclidean distance degree of smooth complex projective
  varieties}.
\newblock {\em ArXiv e-prints}, July 2017.

\bibitem{AH2014}
B.~D.~O. Anderson and U.~Helmke.
\newblock Counting critical formations on a line.
\newblock {\em SIAM J. Control Optim.}, 52(1):219--242, 2014.

\bibitem{Bertini4M2}
D.~J. Bates, E.~Gross, A.~Leykin, and J.~I. Rodriguez.
\newblock {Bertini for Macaulay2}.
\newblock {\em preprint arXiv:1310.3297}, 2013.

\bibitem{bertinibook}
D.~J. Bates, J.~D. Hauenstein, A.~J. Sommese, and C.~W. Wampler.
\newblock {\em Numerically solving polynomial systems with {B}ertini},
  volume~25 of {\em Software, Environments, and Tools}.
\newblock Society for Industrial and Applied Mathematics (SIAM), Philadelphia,
  PA, 2013.

\bibitem{CLO2015}
D.~A. Cox, J.~Little, and D.~O'Shea.
\newblock {\em Ideals, varieties, and algorithms}.
\newblock Undergraduate Texts in Mathematics. Springer, Cham, fourth edition,
  2015.

\bibitem{Di}
A.~Dimca.
\newblock {\em Sheaves in topology}.
\newblock Universitext. Springer-Verlag, Berlin, 2004.

\bibitem{DHOST}
J.~Draisma, E.~Horobe\c{t}, G.~Ottaviani, B.~Sturmfels, and R.~R. Thomas.
\newblock The {E}uclidean distance degree of an algebraic variety.
\newblock {\em Found. Comput. Math.}, 16(1):99--149, 2016.

\bibitem{DSS2009}
M.~Drton, B.~Sturmfels, and S.~Sullivant.
\newblock {\em Lectures on algebraic statistics}, volume~39 of {\em Oberwolfach
  Seminars}.
\newblock Birkh\"{a}user Verlag, Basel, 2009.

\bibitem{M2}
D.~R. Grayson and M.~E. Stillman.
\newblock Macaulay2, a software system for research in algebraic geometry.
\newblock Available at http://www.math.uiuc.edu/Macaulay2/.

\bibitem{HL}
C.~Harris and D.~Lowengrub.
\newblock The {C}hern-{M}ather class of the multiview variety.
\newblock {\em Comm. Algebra}, 46(6):2488--2499, 2018.

\bibitem{M74}
R.~D. MacPherson.
\newblock Chern classes for singular algebraic varieties.
\newblock {\em Ann. of Math. (2)}, 100:423--432, 1974.

\bibitem{MRW}
L.~Maxim, J.~Rodriguez, and B.~Wang.
\newblock {Euclidean distance degree of the multiview variety}.
\newblock {\em preprint arXiv:1812.05648}, 2018.

\bibitem{OSS}
G.~Ottaviani, P.-J. Spaenlehauer, and B.~Sturmfels.
\newblock Exact solutions in structured low-rank approximation.
\newblock {\em SIAM J. Matrix Anal. Appl.}, 35(4):1521–1542, 2014.

\bibitem{RW18}
J.~I. Rodriguez and B.~Wang.
\newblock {Computing Euler obstruction functions using maximum likelihood
  degrees}.
\newblock {\em preprint arXiv:1710.04310}, 2017.

\bibitem{TS}
J.~Sch\"{u}rmann and M.~Tib\u{a}r.
\newblock Index formula for {M}ac{P}herson cycles of affine algebraic
  varieties.
\newblock {\em Tohoku Math. J. (2)}, 62(1):29--44, 2010.

\bibitem{STV}
J.~Seade, M.~Tib\u{a}r, and A.~Verjovsky.
\newblock Global {E}uler obstruction and polar invariants.
\newblock {\em Math. Ann.}, 333(2):393--403, 2005.

\bibitem{Sullivant2018}
S.~Sullivant.
\newblock {\em Algebraic statistics}, volume 194 of {\em Graduate Studies in
  Mathematics}.
\newblock American Mathematical Society, Providence, RI, 2018.

\end{thebibliography}

\end{document}